\newtheorem{theorem}{Theorem}
\newtheorem{lemma}[theorem]{Lemma}
\newtheorem{tool}[theorem]{Tool}
\newcommand*{\hm}[1]{#1\nobreak\discretionary{}%
{\hbox{$\mathsurround=0pt #1$}}{}} 
\renewcommand{\le}{\leqslant}
\renewcommand{\ge}{\geqslant}
\newenvironment{remark}{\medskip\parindent=0pt\small\textbf{Remark.\ }\ignorespaces}{\medskip\par\normalsize\parindent=18pt}
\newcommand{\eps}{\varepsilon}
\renewcommand{\phi}{\varphi}
\begin{document}\selectlanguage{english}

\pagestyle{fancy}
\fancyhf{}
\chead[\sc Ilya Vinogradov]{\sc\rightmark}
\cfoot{\arabic{page}}
\renewcommand{\footrulewidth}{0,4pt}

\newcounter{sect}
\setcounter{sect}{0}
\newcommand{\mysection}[1]{\addtocounter{sect}{1}\par\vspace{18pt plus 2pt minus 1pt}{\center\textsc{\large\arabic{sect}. \ignorespaces#1}\vspace{6pt plus 2pt minus 1pt}\par}\markright{#1}\setcounter{equation}{0}\setcounter{theorem}{0}}

\numberwithin{equation}{sect}
\numberwithin{theorem}{sect}

\title{A Generalization of a Result of Hardy and Littlewood\footnote{Research carried out in part as the author's undergraduate senior thesis at Columbia University supervised by Prof. Patrick Gallagher.}}

\author{Ilya Vinogradov}

\date{\today}
\maketitle

\begin{abstract}
In this note we study the growth of $$\sum_{m=1}^M\frac1{\|m\alpha\|}$$ as a function of $M$ for different classes of $\alpha\in[0,1)$. Hardy and Littlewood showed in \cite{HL} that for numbers of bounded type, the sum is $\simeq M\log M$. We give a very simple proof for it. Further we show the following for generic $\alpha$. 
For a non-decreasing function $\phi$ tending to infinity, $$\limsup_{M\to\infty}\frac1{\phi(\log M)}\bigg[\frac1{M\log M}\sum_{m=1}^M\frac1{\|m\alpha\|}\bigg]$$ is zero or infinity according as $$\sum\frac1{k\phi(k)}$$ converges or diverges.
\end{abstract}

\mysection{Introduction}

In 1920's and 30's, Hardy and Littlewood made significant progress in the area of Diophantine approximation. They often relied on advanced techniques: deep results from complex analysis, Ces\`aro summability, $L$-functions, etc. Among the simpler tools were continued fractions; main results in this field had been established by Gauss and Legendre and appeared in a complete form, for example, in 
\cite{Chrystal} (first edition published in 1889). In spite of this fact the new approach presented here is based entirely on the theory of continued fractions. We shall show explicitly how the elements of the continued fraction expansion of $\alpha$ govern the behavior of the sum\footnote{In what follows we use the notation $\|x\|=\displaystyle\min_{n\in\mathbf Z}|x-n|$. We use $f=\mathcal O(g)$ and $f\ll g$ interchangeably and if $f\gg g\gg f$, we write $f\simeq g$. }
\begin{equation}
\sum_{m=1}^M\frac1{\|m\alpha\|}.
\end{equation} The primary source of facts on continued fractions is \cite{Khinchin:CF}; we shall quote results from this book and direct the reader to it for proofs. We shall however present proofs of theorems from \cite{Khinchin:CF} for which we have more elementary proofs.

We start by stating the following instrumental

\begin{lemma}Take any irrational $\alpha\in[0,1)$ and an integer $k>1$. Let $\frac{p_k}{q_k}$ be the $k$-th convergent to $\alpha$. Then, we have the inequality
\begin{equation}\frac1{q_k(q_k+q_{k+1})}<\left|\alpha-\frac{p_k}{q_k}\right|<\frac1{q_kq_{k+1}}
\label{basic_eq}.\end{equation} \end{lemma}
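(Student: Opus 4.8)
The plan is to derive an exact formula for $\alpha-p_k/q_k$ and then bound the single varying quantity appearing in it. First I would recall from the theory of continued fractions \cite{Khinchin:CF} three standard facts: the determinant identity $p_kq_{k-1}-p_{k-1}q_k=(-1)^{k-1}$, the recurrence $q_{k+1}=a_{k+1}q_k+q_{k-1}$ (where $a_{k+1}$ is the $(k+1)$-th partial quotient), and the representation of $\alpha$ through its $(k+1)$-th complete quotient $\alpha_{k+1}=[a_{k+1};a_{k+2},\dots]$, namely
\begin{equation}
\alpha=\frac{\alpha_{k+1}p_k+p_{k-1}}{\alpha_{k+1}q_k+q_{k-1}}.
\end{equation}

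Subtracting $p_k/q_k$ and clearing denominators, the numerator collapses by the determinant identity to $q_kp_{k-1}-p_kq_{k-1}=(-1)^k$, which after taking absolute values yields
\begin{equation}
\left|\alpha-\frac{p_k}{q_k}\right|=\frac1{q_k(\alpha_{k+1}q_k+q_{k-1})}.
\end{equation}
Both inequalities in \eqref{basic_eq} therefore reduce to bounding the denominator, i.e.\ to bounding the complete quotient $\alpha_{k+1}$ from above and below.

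The key observation is that $\alpha_{k+1}=a_{k+1}+1/\alpha_{k+2}$ with $\alpha_{k+2}>1$ strictly, because $\alpha$ is irrational and hence its expansion does not terminate; consequently $a_{k+1}<\alpha_{k+1}<a_{k+1}+1$. Feeding the left inequality into the recurrence gives $\alpha_{k+1}q_k+q_{k-1}>a_{k+1}q_k+q_{k-1}=q_{k+1}$, which produces the upper bound $|\alpha-p_k/q_k|<1/(q_kq_{k+1})$; feeding in the right inequality gives $\alpha_{k+1}q_k+q_{k-1}<(a_{k+1}+1)q_k+q_{k-1}=q_{k+1}+q_k$, which produces the lower bound $|\alpha-p_k/q_k|>1/(q_k(q_k+q_{k+1}))$.

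I expect no serious obstacle here: the computation is short and the direction of each estimate is dictated automatically by the two-sided bound on $\alpha_{k+1}$. The only point genuinely requiring care is the \emph{strictness} of the inequalities, which rests entirely on the irrationality of $\alpha$ (guaranteeing that $\alpha_{k+2}$ is finite and strictly exceeds $1$, so that $0<1/\alpha_{k+2}<1$); for rational $\alpha$ the expansion would terminate and the endpoints could be attained, degenerating the strict inequalities into equalities.
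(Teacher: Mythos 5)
Your proof is correct, but there is nothing in the paper to compare it against: the paper does not prove this lemma at all, quoting it directly from Khinchin's book (the citation to Theorems 9 and 3 of \cite{Khinchin:CF} immediately follows the statement). Your argument is the standard complete-quotient computation: from $\alpha=\frac{\alpha_{k+1}p_k+p_{k-1}}{\alpha_{k+1}q_k+q_{k-1}}$ and the determinant identity, one gets the exact formula $\left|\alpha-\frac{p_k}{q_k}\right|=\frac{1}{q_k(\alpha_{k+1}q_k+q_{k-1})}$, and both inequalities drop out of the strict bounds $a_{k+1}<\alpha_{k+1}<a_{k+1}+1$, whose strictness you correctly trace to the irrationality of $\alpha$. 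The classical route in Khinchin is slightly different and more geometric: the upper bound follows because $\alpha$ lies strictly between the consecutive convergents $\frac{p_k}{q_k}$ and $\frac{p_{k+1}}{q_{k+1}}$, whose distance is exactly $\frac1{q_kq_{k+1}}$, while the lower bound follows because the mediant $\frac{p_k+p_{k+1}}{q_k+q_{k+1}}$ lies between $\frac{p_k}{q_k}$ and $\alpha$, at distance exactly $\frac1{q_k(q_k+q_{k+1})}$ from $\frac{p_k}{q_k}$. Your version has the advantage of producing both bounds from a single exact identity, with strictness visible at a glance; the convergent/mediant argument needs almost no algebra but splits into two separate observations. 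Either way, your write-up would serve as a legitimate self-contained justification of the lemma that the paper itself omits.
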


This Lemma from \cite{Khinchin:CF} shows how well any given number can be approximated by its convergents. It will be key in estimating our sums as the ``largest'' terms in our sums will typically appear when $m=q_k$ for some $k$.

First we shall tackle the lower bound for the sum $$S_M(\alpha)=\sum_{m=1}^M\frac1{\|m\alpha\|}.$$ Then, we shall derive an upper bound. Whenever we write $S_M(\alpha)$, we imply that it is defined; i.e., $\alpha\not\in\mathbf Q.$

\mysection{Preliminary Results}

Let us introduce some useful terminology and basic tools. A real number is said to be of \emph{bounded type} if the elements of its continued fraction expansion are bounded. It is shown in \cite{Khinchin:CF} and follows from Theorem \ref{Hinchin_th} that the measure of bounded type numbers is zero.

\begin{tool}For real numbers $x,y$, we have $$\|x+y\|\le\|x\|+\|y\|.$$\label{norm}
\end{tool}
\begin{proof}
Let $\|x\|=|x-u|$ and $\|y\|=|y-v|$ for integers $u$ and $v$. Then, by the triangle inequality for real numbers, $$\|x\|+\|y\|=|x-u|+|y-v|\ge|x+y-(u+v)|\ge \min_{n\in\mathbf Z}|x+y-n|=\|x+y\|.$$ \end{proof}

\begin{lemma}For each $m\in\{1,\dots,q_{k+1}\}$, we have \begin{equation}\left\|\frac{mp_k}{q_k}\right\|-\frac1{q_k}<\|m\alpha\|<\left\|\frac{mp_k}{q_k}\right\|+\frac1{q_k}.\label{twosides_eq}\end{equation}  for $k>1$.
\end{lemma}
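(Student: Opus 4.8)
The plan is to approximate $\alpha$ by its convergent $p_k/q_k$ and to absorb the resulting error using the subadditivity of $\|\cdot\|$. I would set $\delta=\alpha-\frac{p_k}{q_k}$, so that $m\alpha=\frac{mp_k}{q_k}+m\delta$. By the right-hand inequality of \eqref{basic_eq} we have $|\delta|<\frac1{q_kq_{k+1}}$, and since $m\le q_{k+1}$ by hypothesis this gives $|m\delta|<\frac1{q_k}$. Because $0\in\mathbf Z$ we always have $\|x\|\le|x|$, so in fact $\|m\delta\|<\frac1{q_k}$ as well. This single estimate is the engine of the whole argument.

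For the upper bound I would apply Tool \ref{norm} to the splitting $m\alpha=\frac{mp_k}{q_k}+m\delta$, obtaining
$$\|m\alpha\|\le\left\|\frac{mp_k}{q_k}\right\|+\|m\delta\|<\left\|\frac{mp_k}{q_k}\right\|+\frac1{q_k}.$$
For the lower bound I would apply the same tool to the reverse splitting $\frac{mp_k}{q_k}=m\alpha-m\delta$, which yields
$$\left\|\frac{mp_k}{q_k}\right\|\le\|m\alpha\|+\|m\delta\|<\|m\alpha\|+\frac1{q_k},$$
and rearranging produces the left-hand inequality of \eqref{twosides_eq}. Together these two estimates are exactly the desired two-sided bound.

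There is essentially no obstacle here beyond bookkeeping: the entire proof reduces to the two facts already in hand, namely the convergent estimate \eqref{basic_eq} and the quasi-triangle inequality of Tool \ref{norm}. The only point worth verifying is that the inequalities stay strict, which is guaranteed by the strict bound $|\delta|<\frac1{q_kq_{k+1}}$ (valid because $\alpha$ is irrational) combined with the hypothesis $m\le q_{k+1}$; notably, only the right-hand half of \eqref{basic_eq} is needed.
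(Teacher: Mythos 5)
Your proposal is correct and takes essentially the same approach as the paper: multiply the right-hand side of \eqref{basic_eq} by $m\le q_{k+1}$ to bound the error $m\alpha-\frac{mp_k}{q_k}$ by $\frac1{q_k}$, then apply Tool \ref{norm} in the two directions to get the two-sided bound. The only cosmetic difference is that you invoke $\|x\|\le|x|$ to pass from the absolute value to the norm, while the paper uses $q_k\ge2$ to note the stronger fact $\left\|m\alpha-\frac{mp_k}{q_k}\right\|=\left|m\alpha-\frac{mp_k}{q_k}\right|$; either suffices.
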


\begin{proof}We make use of the right-hand side of \eqref{basic_eq}. Multiplying it by a positive integer $m\in\{1,\dots,q_{k+1}\}$ gives $$\left|m\alpha-\frac{mp_k}{q_k}\right|<\frac{m}{q_kq_{k+1}}\le\frac1{q_k}.$$ Since $q_k\ge 2$, we have $\frac m{q_kq_{k+1}}\le\frac12$, and thus
\begin{equation}\label{rhs_eq}\left\|m\alpha-\frac{mp_k}{q_k}\right\|=\left|m\alpha-\frac{mp_k}{q_k}\right|<\frac1{q_k}.\end{equation}
Applying Tool \ref{norm} to the above inequality in two different ways we get the desired result.
\end{proof}

\mysection{Lower Bound on $S_M(\alpha)$}

Now we have enough instruments to begin analyzing the first sum. Theorems \ref{firstsumbelow_th} and \ref{firstsumabove_th} will give respectively lower and upper bounds for the sum.

\begin{theorem}\label{firstsumbelow_th}For any $\alpha\in[0,1)$, fix $k$ so that $q_k\le M<q_{k+1}$. We have $$S_M(\alpha)\gg M\log q_k,$$ with an absolute implied constant.
\end{theorem}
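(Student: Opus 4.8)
The plan is to obtain the lower bound by summing over a single well-chosen block of indices $m$ rather than trying to control the whole range $1 \le m \le M$ at once. Specifically, since $q_k \le M$, I would restrict attention to the indices $m \in \{1, \dots, q_k\}$ (or a suitable sub-block), because on this range Lemma \eqref{twosides_eq} tells me that $\|m\alpha\|$ is comparable to $\|mp_k/q_k\|$ up to an additive error of $1/q_k$. The point of switching from $\alpha$ to the rational $p_k/q_k$ is that the values $\|mp_k/q_k\|$ for $m = 1, \dots, q_k$ are completely explicit: since $\gcd(p_k, q_k) = 1$, the residues $mp_k \bmod q_k$ range over all of $\{0, 1, \dots, q_k - 1\}$, so the multiset $\{\|mp_k/q_k\| : m = 1, \dots, q_k\}$ is exactly $\{j/q_k\}$ with each value $j \in \{1, \dots, \lfloor q_k/2\rfloor\}$ appearing (essentially) twice.

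With that structure in hand, the main computation is a harmonic-sum estimate. First I would establish the lower bound $\|m\alpha\| < \|mp_k/q_k\| + 1/q_k$, and use it to bound each term from below:
\begin{equation}
\frac1{\|m\alpha\|} > \frac1{\|mp_k/q_k\| + 1/q_k}.
\end{equation}
Summing over the $m$ whose residue $j = mp_k \bmod q_k$ is nonzero, and grouping by the value of $j$, the right-hand side becomes (up to the factor of roughly two from the $\pm$ symmetry)
\begin{equation}
\sum_{j=1}^{\lfloor q_k/2\rfloor} \frac{2}{(j+1)/q_k} \gg q_k \sum_{j=1}^{q_k/2} \frac1{j+1} \gg q_k \log q_k.
\end{equation}
I would discard the single term with $j = 0$ (where $m$ is a multiple of $q_k$, contributing the near-vanishing denominator), since the block $\{1, \dots, q_k\}$ contains at most one such index and we only need a lower bound. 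This already yields $S_{q_k}(\alpha) \gg q_k \log q_k$, and since $M \ge q_k$ and all terms are positive, $S_M(\alpha) \ge S_{q_k}(\alpha) \gg q_k \log q_k \gg M \log q_k$ provided $M \asymp q_k$; if $M$ is much larger than $q_k$, the bound $M \log q_k$ is only strengthened, so I must be a little careful to phrase the conclusion as $\gg M \log q_k$ rather than $q_k \log q_k$.

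The main obstacle is handling the additive error $1/q_k$ cleanly so that it does not swamp the small terms. The worst terms are those where $\|mp_k/q_k\|$ is itself of size $1/q_k$ (i.e. $j$ near $1$); there the perturbation $+1/q_k$ changes the denominator by a constant factor only, which is harmless for a lower bound, but I want to make sure the shift $j \mapsto j+1$ in the denominator is accounted for honestly, which is why the displayed sum uses $j+1$. A secondary point of care is the hypothesis $q_k \ge 2$ (equivalently $k > 1$), needed to invoke Lemma \eqref{twosides_eq}; for the finitely many small values of $k$ the statement $S_M \gg M \log q_k$ holds trivially with a suitable absolute constant since $\log q_k$ is then bounded, so these edge cases cause no difficulty. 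The argument uses nothing beyond Lemma \eqref{twosides_eq}, the coprimality of $p_k$ and $q_k$, and the divergence of the harmonic series, so the implied constant is indeed absolute.
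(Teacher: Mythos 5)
Your per-block computation is correct and is in fact exactly the paper's mechanism (switch to $p_k/q_k$ via Lemma \eqref{twosides_eq}, use coprimality to turn the residues $mp_k \bmod q_k$ into a full residue system, and sum the harmonic series), but your proof as stated has a genuine gap: you only sum over the single block $\{1,\dots,q_k\}$, which yields $S_M(\alpha)\ge S_{q_k}(\alpha)\gg q_k\log q_k$, and this is strictly weaker than the claimed $M\log q_k$ whenever $M/q_k$ is large. Your parenthetical remark that ``if $M$ is much larger than $q_k$, the bound $M\log q_k$ is only strengthened'' has the logic backwards: when $M\gg q_k$ the \emph{target} becomes stronger, not your estimate, and no rephrasing of the conclusion can close that factor of $M/q_k$. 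This case genuinely occurs and is unbounded: $M$ may be as large as $q_{k+1}-1$, and $q_{k+1}/q_k\approx a_{k+1}$ is arbitrarily large for numbers of unbounded type; even restricting to bounded type, a one-block argument would make the implied constant depend on $\alpha$, whereas the theorem asserts an absolute constant.

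The fix is the step your proposal is missing, and it is the heart of the paper's proof: partition $\{1,\dots,M\}$ into the $\lfloor M/q_k\rfloor\simeq M/q_k$ consecutive blocks $\{1+lq_k,\dots,(l+1)q_k\}$ with $(l+1)q_k\le M$, and run your argument on \emph{every} block, not just the first. Nothing in your computation used $m\le q_k$: Lemma \eqref{twosides_eq} is valid for all $m\le q_{k+1}$, hence for all $m\le M$, and as $m$ ranges over any block of $q_k$ consecutive integers the residues $mp_k \bmod q_k$ again run over all of $\{0,\dots,q_k-1\}$. Thus each block contributes
\begin{equation*}
\sum_{m=1+lq_k}^{(l+1)q_k}\frac1{\|m\alpha\|}\gg q_k\log q_k,
\end{equation*}
and summing over the $\simeq M/q_k$ blocks gives $S_M(\alpha)\gg \frac{M}{q_k}\cdot q_k\log q_k=M\log q_k$ with an absolute constant. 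With this one addition your argument becomes essentially identical to the paper's.
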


\begin{proof}
We use the right-hand side of \eqref{twosides_eq} to approximate our sum. We shall then use the fact that $(p_k,q_k)=1$ to simplify the bound.

We sum from $m=1+lq_k$ to $(l+1)q_k$ for some $l$ such that $(l+1)q_k\le M$: $$\sum_{m=1+lq_k}^{(l+1)q_k}\!\!\frac1{\|m\alpha\|}\ge\!\!\sum_{m=1+lq_k}^{(l+1)q_k}\!\!\frac1{\|\frac{mp_k}{q_k}\|+\frac1{q_k}}\gg q_k\!\!\sum_{m=1+lq_k}^{(l+1)q_k}\frac1{[mp_k]+1}=q_k\sum_{m=1}^{q_k}\frac1{[mp_k]+1}.$$ The notation $[t]$  means the integer $t'\in\{0,\dots,q_k-1\}$ such that $t\equiv t'\pmod{q_k}.$ As $(p_k,q_k)=1$, the latter expression requires that we sum reciprocals of integers from 1 to $q_k$. The sum on the right becomes $$\sum_{n=1}^{q_k}\frac1n\gg\log q_k.$$ So we have $$\sum_{m=1+lq_k}^{(l+1)q_k}\frac1{\|m\alpha\|}\gg q_k\log q_k.$$ The number of intervals $[1+lq_k,(l+1)q_k]$ over which we summed is $\simeq\frac M{q_k},$ whence $$\frac M{q_k}q_k\log q_k=M\log q_k.$$
\end{proof}

To further analyze the lower bound we invoke the following theorem of Khinchin:

\begin{theorem}[Khinchin]Let $l$ stand for Lebesgue measure on $[0,1)$ and let $a_k\colon[0,1)\to\mathbf N$ denote the $k$-th entry of the continued fraction expansion. For any $\phi\colon\mathbf N\to\mathbf R^+$, \begin{displaymath}l\{a_k>\phi(k)\mbox{ i.o.}\}=1\mbox{ or }0\end{displaymath}according as \begin{displaymath}\sum_k\frac1{\phi(k)}\mbox{ is divergent or convergent.}\end{displaymath} \label{Hinchin_th}
\end{theorem}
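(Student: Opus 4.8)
The plan is to reduce the statement to a Borel--Cantelli argument for the events $E_k = \{a_k > \phi(k)\}$, the only genuinely continued-fraction input being a uniform two-sided estimate for the conditional measure of $E_k$ given the first $k-1$ partial quotients. First I would recall the fundamental intervals: for fixed $a_1,\dots,a_k$, the cylinder $\Delta(a_1,\dots,a_k)\subset[0,1)$ consisting of all $x$ whose first $k$ partial quotients are prescribed is an interval with endpoints $p_k/q_k$ and $(p_k+p_{k-1})/(q_k+q_{k-1})$, so by $p_kq_{k-1}-p_{k-1}q_k=(-1)^{k-1}$ its length is $1/\bigl(q_k(q_k+q_{k-1})\bigr)$. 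These are the same basic facts on convergents that underlie Lemma~\ref{basic_eq}, and they let me express everything through the $q_j$.

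The key step, and the one I expect to be the main obstacle, is a bounded-distortion estimate showing that successive partial quotients are quantitatively quasi-independent. Refining $\Delta(a_1,\dots,a_k)$ by prescribing $a_{k+1}=n$ produces a subinterval of length $1/\bigl(q'(q'+q_k)\bigr)$ with $q'=nq_k+q_{k-1}$, so the conditional probability of $\{a_{k+1}=n\}$ is
$$\frac{|\Delta(a_1,\dots,a_k,n)|}{|\Delta(a_1,\dots,a_k)|}=\frac{q_k(q_k+q_{k-1})}{(nq_k+q_{k-1})(nq_k+q_{k-1}+q_k)}.$$
Using $0\le q_{k-1}\le q_k$ to trap the numerator between $q_k^2$ and $2q_k^2$ and the denominator between $n(n+1)q_k^2$ and $(n+1)(n+2)q_k^2$, this ratio is $\simeq 1/n^2$ uniformly in the $a_j$. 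Summing over $n>t$ and telescoping then yields the crucial comparison
$$l\bigl(a_{k+1}>t\mid\Delta(a_1,\dots,a_k)\bigr)\simeq\frac1t,$$
valid for every cylinder and every $t\ge1$ with absolute implied constants. Establishing this cleanly is the whole continued-fraction content; everything afterward is soft.

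The two halves then follow by Borel--Cantelli. For convergence, I would sum the upper bound over all level-$(k-1)$ cylinders to get $l(E_k)\ll 1/\phi(k)$, so $\sum_k l(E_k)<\infty$ when $\sum_k 1/\phi(k)<\infty$, and the first Borel--Cantelli lemma gives $l(\limsup_k E_k)=0$. For divergence I would use the lower bound: for every cylinder $C$ of level $k-1$ one has $l(E_k\mid C)\ge c/\phi(k)$ with $c>0$ absolute, hence for any event $A$ that is a union of such cylinders, $l(E_k^c\cap A)\le(1-c/\phi(k))\,l(A)$. Taking $A=\{a_j\le\phi(j):N\le j<k\}$ and iterating gives
$$l\{a_j\le\phi(j)\text{ for }N\le j\le K\}\le\prod_{k=N}^{K}\Bigl(1-\frac{c}{\phi(k)}\Bigr)\le\exp\Bigl(-c\sum_{k=N}^{K}\frac1{\phi(k)}\Bigr),$$
which tends to $0$ as $K\to\infty$ precisely because $\sum_k 1/\phi(k)$ diverges. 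Hence $l\{a_j\le\phi(j)\text{ eventually}\}=0$ for every $N$, so the complement has full measure and $l(\limsup_k E_k)=1$. (If $\phi(k)<1$ for infinitely many $k$ the divergence case is automatic since $a_k\ge1$, so I may assume $\phi(k)\ge1$ throughout.)
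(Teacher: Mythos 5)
Your proof is correct, but note that the paper itself does not actually prove this theorem: it only remarks that one half is a direct application of the Borel--Cantelli lemma (the convergence half, which needs no independence) and that the other half ``requires some more work,'' deferring the independence analysis entirely to Khinchin's book. What you have written out is essentially that deferred classical argument, made self-contained. Your key distortion estimate is right: from the cylinder-length formula $|\Delta(a_1,\dots,a_k)|=1/\bigl(q_k(q_k+q_{k-1})\bigr)$ and $0\le q_{k-1}\le q_k$ one gets $1/\bigl((n+1)(n+2)\bigr)\le l\bigl(a_{k+1}=n\mid\Delta\bigr)\le 2/\bigl(n(n+1)\bigr)$ uniformly over cylinders, and telescoping gives $l\bigl(a_{k+1}>t\mid\Delta\bigr)\simeq 1/t$; this is exactly the ``sufficient independence among the elements of continued fractions'' that the paper alludes to without exhibiting. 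The convergent case then follows from the first Borel--Cantelli lemma, and your divergent case correctly substitutes quasi-independence for genuine independence: since each level-$(k-1)$ set $\{a_j\le\phi(j),\,N\le j<k\}$ is a union of cylinders, the conditional lower bound iterates to $l\{a_j\le\phi(j),\,N\le j\le K\}\le\exp\bigl(-c\sum_{k=N}^{K}1/\phi(k)\bigr)\to0$, and a countable union over $N$ finishes it. Your reduction to $\phi\ge1$ is also handled properly, and with that normalization the constant $c$ can be taken small enough (e.g.\ $c=1/3$) that every factor $1-c/\phi(k)$ is positive, so the product bound is legitimate. The trade-off between the two treatments is the expected one: the paper's citation keeps the note short and focused on its new results, whereas your argument makes the zero-one law self-contained using nothing beyond the fundamental-interval formula, and it avoids invoking any general second Borel--Cantelli machinery by the direct product estimate. (One trivial slip: \ref{basic_eq} labels the displayed inequality \eqref{basic_eq}, not the Lemma containing it, so your cross-reference should be to that equation.)
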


One direction in the proof is a direct application of the Borel-Cantelli Lemma as it doesn't rely on independence, while the other requires some more work. Details that exhibit sufficient independence among the elements of continued fractions as well as the proof in its entirety can be found in \cite{Khinchin:CF}.

Next, we discuss two lemmata on metric theory of continued fractions; we don't truly capitalize on them until we get to the upper bound of $S_M(\alpha)$. The proof of the first Lemma can be found in \cite{Sinai:IET,Walters:ET} and in other introductory literature on ergodic theory. The second Lemma is proven in \cite{Khinchin:CF} but we use the proof from \cite{Linnik} which is nicer and elucidates the ergodic nature of the Gauss map.

\begin{lemma}[Invariant measure for the Gauss map]\label{gauss_th}The transformation $$\begin{array}{cccc}T\colon&([0,1),\mathscr B[0,1), l)&\to&([0,1),\mathscr B[0,1),l)\\[5pt]&x&\mapsto&\frac1x-\left[\frac1x\right],\end{array}$$ where $\mathscr B$ stands for the Borel $\sigma$-field and $l$ is Lebesgue measure on it, is ergodic and its invariant measure is $$d\mu(x)=\frac1{\log 2}\cdot\frac{dl(x)}{x+1}.$$
\end{lemma}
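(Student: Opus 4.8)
The statement bundles two independent assertions: that the Gauss measure $\mu$ is $T$-invariant, and that $T$ is ergodic with respect to it. The plan is to treat these separately. Invariance is a direct computation, whereas ergodicity is the real content; I would establish ergodicity with respect to Lebesgue measure $l$ and then transfer it to $\mu$, which is legitimate because the density $\frac1{\log2}\cdot\frac1{1+x}$ is bounded between $\frac1{2\log2}$ and $\frac1{\log2}$ on $[0,1)$, so $\mu$ and $l$ are mutually absolutely continuous and share the same null sets.

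For invariance, since the half-open intervals $[0,t)$ with $t\in(0,1)$ form a $\pi$-system generating $\mathscr B[0,1)$, it suffices to check $\mu(T^{-1}[0,t))=\mu([0,t))$. A point $x$ lies in $T^{-1}[0,t)$ exactly when $\frac1x\in[n,n+t)$ for some integer $n\ge1$, so $T^{-1}[0,t)=\bigcup_{n\ge1}\left(\frac1{n+t},\frac1n\right]$, a disjoint union. Then $\mu(T^{-1}[0,t))=\frac1{\log2}\sum_{n\ge1}\int_{1/(n+t)}^{1/n}\frac{dx}{1+x}$, and each term equals $\log\frac{n+1}{n}-\log\frac{n+t+1}{n+t}$. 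Summing, the series telescopes to $\log(1+t)$, which is exactly $\log2\cdot\mu([0,t))$; this gives invariance.

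Ergodicity is the main obstacle. The tool I would use is the cylinder structure: for a prescribed block $a_1,\dots,a_n$ of partial quotients, the cylinder $\Delta_n=\Delta(a_1,\dots,a_n)$ is an interval on which $T^n$ acts as the Möbius bijection onto $[0,1)$ with inverse $t\mapsto\frac{p_nt+p_{n-1}}{q_nt+q_{n-1}}$, where $p_j/q_j$ are the convergents determined by the block. Using $|p_nq_{n-1}-p_{n-1}q_n|=1$ one computes that the preimage under $T^n|_{\Delta_n}$ of a subinterval $(u,v)\subseteq[0,1)$ has length $\frac{v-u}{(q_nu+q_{n-1})(q_nv+q_{n-1})}$, while $l(\Delta_n)=\frac1{q_n(q_n+q_{n-1})}$. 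Since $0\le u,v\le1$ and $q_{n-1}\le q_n$, the ratio of the former to $(v-u)\,l(\Delta_n)$ is at least $\frac{q_n}{q_n+q_{n-1}}\ge\frac12$, a bound independent of the block and of $n$. This is the crucial Knopp-type distortion estimate; passing from intervals to the generated $\sigma$-algebra (both sides are measures in the argument agreeing on a generating algebra) it yields $l\big((T^n|_{\Delta_n})^{-1}E\big)\ge\frac12\,l(\Delta_n)\,l(E)$ for every Borel $E$.

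Given a $T$-invariant set $A$ with $l(A)>0$, invariance gives $A\cap\Delta_n=(T^n|_{\Delta_n})^{-1}(A)$ up to measure zero, so the estimate reads $\frac{l(A\cap\Delta_n)}{l(\Delta_n)}\ge\frac12\,l(A)$ for every cylinder. If $l(A^c)$ were also positive, I would pick a Lebesgue density point $x_0$ of $A^c$; the cylinders $\Delta_n$ containing $x_0$ are nested intervals of length tending to $0$ that shrink nicely to $x_0$ (each contains $x_0$ and has length equal to half that of the circumscribed symmetric interval), so $\frac{l(A^c\cap\Delta_n)}{l(\Delta_n)}\to1$. This contradicts the uniform bound $\frac{l(A^c\cap\Delta_n)}{l(\Delta_n)}\le1-\frac12 l(A)<1$. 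Hence $l(A)=1$, so $T$ is ergodic for $l$ and therefore for the equivalent measure $\mu$. The two places demanding care are the extension of the lower distortion bound from intervals to all Borel sets and the justification of the density-point limit along the non-centered cylinders.
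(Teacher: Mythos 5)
Your proposal cannot be compared against an in-paper argument, because the paper does not prove this lemma at all: it explicitly defers to the literature (``The proof of the first Lemma can be found in \cite{Sinai:IET,Walters:ET}\dots''). What you have written is therefore a self-contained replacement for a citation, and it follows the standard textbook route. Both halves are sound. The invariance computation is exactly right: $T^{-1}[0,t)=\bigcup_{n\ge1}\bigl(\frac1{n+t},\frac1n\bigr]$ disjointly, each piece carries $\mu$-mass $\frac1{\log 2}\bigl(\log\frac{n+1}{n}-\log\frac{n+t+1}{n+t}\bigr)$, the series telescopes to $\frac{\log(1+t)}{\log 2}=\mu([0,t))$, and Dynkin's $\pi$-$\lambda$ theorem upgrades agreement on the generating $\pi$-system to agreement on $\mathscr B[0,1)$. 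The ergodicity half --- a uniform lower distortion bound on cylinders combined with the Lebesgue density theorem at a density point of the complement of an invariant set --- is also correct, and you rightly flag the two delicate steps: the interval-to-Borel extension of the inequality goes through by regularity (or by approximation by finite disjoint unions of intervals), and the density theorem applies to the non-centered cylinders precisely for the reason you give, namely that a cylinder of length $h$ containing $x_0$ lies inside $[x_0-h,x_0+h]$ and fills half its measure. The transfer of ergodicity from $l$ to $\mu$ via the bounded density is legitimate.

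One error should be corrected, though it is cosmetic rather than structural: the inverse branch of $T^n$ on $\Delta(a_1,\dots,a_n)$ is $t\mapsto\frac{p_n+tp_{n-1}}{q_n+tq_{n-1}}$, not $t\mapsto\frac{p_nt+p_{n-1}}{q_nt+q_{n-1}}$. As written, your preimage-length formula $\frac{v-u}{(q_nu+q_{n-1})(q_nv+q_{n-1})}$ and your cylinder length $l(\Delta_n)=\frac1{q_n(q_n+q_{n-1})}$ come from incompatible index conventions (your Möbius map would force $l(\Delta_n)=\frac1{q_{n-1}(q_n+q_{n-1})}$). With the correct branch the preimage of $(u,v)$ has length $\frac{v-u}{(q_n+q_{n-1}u)(q_n+q_{n-1}v)}$, the cylinder length is as you state, and the ratio bound $\frac{q_n}{q_n+q_{n-1}}\ge\frac12$ drops out unchanged, so the key estimate and everything downstream survive. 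As for what each choice buys: the paper's citation keeps the note focused on its new results, since this lemma is classical input used only to drive the pointwise ergodic theorem in Lemma \ref{exponential_th}; your proof makes the note self-contained, in the elementary spirit the introduction advertises, at the cost of about a page.
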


\begin{lemma}[Exponential growth of partial quotients]\label{exponential_th}There exist positive constants $a$ and $A$ such that the statement $a<\sqrt[k]{q_k}<A$ holds a.e. on $[0,1)$ for $k$ sufficiently large. \end{lemma}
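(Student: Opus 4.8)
The plan is to reduce the growth of $q_k$ to a Birkhoff average of $\log$ under the Gauss map $T$ and then exploit the ergodicity supplied by Lemma~\ref{gauss_th}. Writing $x_i=T^i(\alpha)$ and normalizing the convergents by $p_{-1}=1,\ p_0=0,\ q_{-1}=0,\ q_0=1$, I would first record the identity $\alpha=\dfrac{p_k+x_k\,p_{k-1}}{q_k+x_k\,q_{k-1}}$, which follows by induction from the recurrences $p_k=a_kp_{k-1}+p_{k-2}$, $q_k=a_kq_{k-1}+q_{k-2}$ together with $T^k(\alpha)=[0;a_{k+1},a_{k+2},\dots]$. Solving for $\alpha q_k-p_k$ gives the telescoping relation $\alpha q_k-p_k=-x_k(\alpha q_{k-1}-p_{k-1})$, so that
\[
\left|\alpha q_k-p_k\right|=\prod_{i=0}^{k}T^i(\alpha),\qquad\text{hence}\qquad \log\left|\alpha q_k-p_k\right|=\sum_{i=0}^{k}\log T^i(\alpha).
\]
The right-hand side is, up to a negligible index shift, a Birkhoff sum for the observable $f(x)=\log x$.

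The next step is to apply the Birkhoff ergodic theorem to $f$. Since $T$ is ergodic with respect to the Gauss measure $\mu$ by Lemma~\ref{gauss_th}, and since $\mu$ and $l$ are mutually absolutely continuous (the density $\frac1{(1+x)\log 2}$ is bounded above and below away from $0$), a $\mu$-a.e.\ conclusion is automatically an $l$-a.e.\ conclusion. Birkhoff then yields, for a.e.\ $\alpha$,
\[
\frac1k\sum_{i=0}^{k-1}\log T^i(\alpha)\ \longrightarrow\ \int_0^1\log x\,d\mu(x)=:-\gamma,
\]
and because $\log x<0$ on $(0,1)$ while the integral is finite, we have $0<\gamma<\infty$. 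Dividing the identity above by $k$ therefore gives $\tfrac1k\log\left|\alpha q_k-p_k\right|\to-\gamma$ a.e.

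It remains to pass from $\left|\alpha q_k-p_k\right|$ back to $q_k$, and here I would use \eqref{basic_eq}: multiplying it by $q_k$ and using $q_k<q_{k+1}$ gives $\frac1{2q_{k+1}}<\left|\alpha q_k-p_k\right|<\frac1{q_{k+1}}$, so that $-\log\left|\alpha q_k-p_k\right|=\log q_{k+1}+O(1)$. Combining this with the ergodic limit produces $\tfrac1k\log q_{k+1}\to\gamma$, hence $\tfrac1k\log q_k\to\gamma$ and $\sqrt[k]{q_k}\to e^{\gamma}$ for a.e.\ $\alpha$. As $e^{\gamma}$ is a fixed constant with $1<e^{\gamma}<\infty$, convergence means that for a.e.\ $\alpha$ there is a threshold (depending on $\alpha$) beyond which $\sqrt[k]{q_k}$ lies in any prescribed neighbourhood of $e^{\gamma}$; choosing absolute constants $a,A$ with $a<e^{\gamma}<A$ establishes the claim.

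I expect the one genuine obstacle to be the integrability of $f(x)=\log x$, without which Birkhoff does not apply: the singularity at $0$ must be controlled, but $\int_0^1|\log x|\,dl(x)=1<\infty$ and the boundedness of the Gauss density give $f\in L^1(\mu)$, so $\gamma$ is finite and the upper bound $A$ exists. Everything else is bookkeeping with the continued-fraction recurrences and \eqref{basic_eq}. It is worth noting that the lower bound is in fact elementary and holds for \emph{every} irrational $\alpha$, since $q_k\ge F_k$ forces $\sqrt[k]{q_k}\ge\sqrt[k]{F_k}\to\tfrac{1+\sqrt5}{2}$; the substance of the lemma is the a.e.\ upper bound, which is exactly what the finiteness of $\gamma$ supplies.
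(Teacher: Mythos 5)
Your proof is correct, but it takes a genuinely different route from the paper's. Both arguments rest on Lemma \ref{gauss_th} and the pointwise ergodic theorem, but the paper applies Birkhoff's theorem to the observable $f(x)=\log\left[\frac1x\right]$, i.e.\ to the logarithm of the first partial quotient, obtaining $(a_1\cdots a_k)^{1/k}\to e^C$ a.e., and then traps the denominators with the elementary squeeze $a_1\cdots a_k<q_k<2^ka_1\cdots a_k$; this yields $e^C\le\liminf_k\sqrt[k]{q_k}\le\limsup_k\sqrt[k]{q_k}\le 2e^C$ a.e., which suffices for the lemma but, because of the factor $2^k$, gives only two-sided bounds rather than a limit. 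You instead apply Birkhoff to $f(x)=\log x$ and use the exact product identity $|q_k\alpha-p_k|=\prod_{i=0}^{k}T^i(\alpha)$, converting the Birkhoff sum into $-\log q_{k+1}+\mathcal O(1)$ via \eqref{basic_eq}. This is the classical L\'evy argument: it proves the strictly stronger statement that $\sqrt[k]{q_k}$ \emph{converges} a.e.\ to $e^{\gamma}$ with $\gamma=-\frac1{\log 2}\int_0^1\frac{\log x}{1+x}\,dl(x)=\frac{\pi^2}{12\log 2}$, which is precisely the Khinchin--L\'evy refinement that the paper's remark only cites as known. The price of your route is the extra bookkeeping with the identity $\alpha=(p_k+x_kp_{k-1})/(q_k+x_kq_{k-1})$ and its telescoping consequence; what it buys is the limit itself together with the explicit constant, whereas the paper's proof is shorter and needs nothing beyond the recurrence $q_k=a_kq_{k-1}+q_{k-2}$. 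Your handling of the technical points---integrability of $\log x$ with respect to the Gauss measure, equivalence of $\mu$ and Lebesgue measure for transferring the a.e.\ conclusion, and the harmless index shift in the Birkhoff sum---is sound, as is your closing observation that the lower bound $a$ actually holds for every irrational $\alpha$ by the Fibonacci comparison, which matches the paper's own remark.
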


\begin{remark}The lower bound is actually true for all $k$ and $\alpha$. It suffices to notice that $q_k$ grow at the lowest rate when $\alpha=[1,1,1,\dots]=\frac{\sqrt 5-1}2$. In this case, $q_k$ are consecutive Fibonacci numbers and grow geometrically with common ratio $\frac{\sqrt 5+1}2>1$. The second statement is false on a non-empty negligible set. Take $\alpha=[1,2,2^2,\dots]$. Then, $a_1a_2\dots a_k=2^{\frac{k(k-1)}2}.$ So, $(a_1\dots a_k)^{1/k}=2^{\frac{k-1}2}\to\infty$ as $k\to\infty.$ It follows from the proof below that for this $\alpha$ the sequence $\sqrt[k]{q_k}$ is unbounded, too.

The theorem is probably due to Khinchin, but we present a simpler proof found in \cite{Linnik}. Khinchin strengthened the conclusion to $\sqrt[k]{q_k}\to\gamma$ a.s. in \cite{Khinchin:art}, and L\'evy showed in \cite{Levy} that $\log\gamma=\frac{\pi^2}{12\log 2}.$
\end{remark}

\begin{proof}Let $T$ and $\mu$ be as in Lemma \ref{gauss_th}. Then for any $\alpha\in [0,1 ) $ we have that $a_k(\alpha)=\left[\frac1{T^{k-1}\alpha}\right].$ It is plain that $$a_kq_{k-1}<a_kq_{k-1}+q_{k-2}=q_k<a_kq_{k-1}+q_{k-1}\le2a_kq_{k-1}.$$ Proceeding inductively we conclude that $a_1\dots a_k<q_k<2^ka_1\dots a_k.$ Hence it suffices to show that $$\sqrt[k]{\textstyle\prod_{i=1}^{k}\limits a_i}\to \mbox{const}>0\quad\mbox{a.s.}$$

Here we invoke Lemma \ref{gauss_th}. Take $f(x)=\log\left[\frac1x\right]\in L^1$. By the Pointwise Ergodic Theorem, $$\lim_{k\to\infty}\frac1k\sum_{i=0}^{k-1} \log\left[\frac1{T^{i}\alpha}\right]=\frac1{\log 2}\int_0^1\log\left[\frac1x\right]\frac{dx}{1+x}=C, \quad\mbox{a.s., rate depending on }\alpha$$ It follows by exponentiation that $$e^C=\lim_{k\to\infty}(a_1\dots a_k)^{1/k},\quad\mbox{a.s.}$$ and our proof is complete.
\end{proof}

We now apply these facts to the case at hand. For numbers of bounded type, we have $M<q_{k+1}=a_{k+1}q_k+q_{k-1}<Cq_k$ and thus we can claim that $$S_M(\alpha)\gg M\log M$$ with a universal implied constant. Almost surely we also get the $M\log M$ lower bound. By Theorem \ref{Hinchin_th}, the set $\{a_{k+1}\le q_k \mbox{ eventually}\}$ is of full measure. 
Indeed, $q_k$ grow at least geometrically everywhere and hence the complement $\{a_{k+1}> q_k \mbox{ i.o.}\}$ is null. 
Thus, on the event $\{a_{k+1}\le q_k \mbox{ eventually}\}$, 
$$\log M\le\log (a_{k+1}q_k+q_{k-1})\ll\log q_k^2\ll \log q_k$$ and up to a constant, $\log M$ and $\log q_k$ are the same. It is quite possible that $S_M(\alpha)\gg M\log M$ for all $\alpha$, but we don't have a proof or a counterexample.

\mysection{Upper Bound on $S_M(\alpha)$}

Now we proceed to obtain the upper bound. This result will not be uniform in $\alpha$: specifically, if $\alpha$ is of unbounded type, then our conclusions are weaker.

\begin{theorem}\label{firstsumabove_th}Let the continued fraction expansion of $\alpha\in[0,1)$ be $[a_1,a_2,\dots]$. Given an integer $M$, take $k$ so that $q_k\le M<q_{k+1}$. Then we have $$S_M(\alpha)\ll M\log q_k+a_{k+1}M.$$\end{theorem}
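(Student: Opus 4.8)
The plan is to mirror the structure of the lower-bound proof (Theorem \ref{firstsumbelow_th}), but now using the \emph{left-hand} side of \eqref{twosides_eq} together with the left-hand side of \eqref{basic_eq} to control the danger terms. As before, the idea is to partition the range $\{1,\dots,M\}$ into roughly $M/q_k$ consecutive blocks of length $q_k$, and on each block replace $\|m\alpha\|$ from below by $\left\|\frac{mp_k}{q_k}\right\|-\frac1{q_k}$. Since $(p_k,q_k)=1$, as $m$ runs over a full block of $q_k$ consecutive integers the residues $mp_k \bmod q_k$ run over all of $\{0,1,\dots,q_k-1\}$, so $\left\|\frac{mp_k}{q_k}\right\|$ takes each value in $\left\{0,\frac1{q_k},\frac2{q_k},\dots\right\}$ essentially twice. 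Thus on a single block the contribution is comparable to $q_k \sum_{n=1}^{q_k/2} \frac{1}{n} \ll q_k \log q_k$, and summing over the $\simeq M/q_k$ blocks yields the main term $M\log q_k$.

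First I would isolate the problematic terms. The subtraction of $\frac1{q_k}$ in the lower bound \eqref{twosides_eq} is harmless except precisely when $\left\|\frac{mp_k}{q_k}\right\|$ is itself small — that is, when $mp_k\equiv 0$ or $\pm 1 \pmod{q_k}$, so that the denominator $\left\|\frac{mp_k}{q_k}\right\|-\frac1{q_k}$ could be zero or negative and $\frac1{\|m\alpha\|}$ is genuinely large. The key point is that in each block there is exactly one value of $m$ with $mp_k\equiv 0$, namely $m$ a multiple of $q_k$. For that single term the bound from the residue count breaks down, and I must instead estimate $\frac1{\|m\alpha\|}$ directly. Here the \emph{lower} bound in \eqref{basic_eq} enters: for $m=q_k$ itself, $\|q_k\alpha\| = q_k\left|\alpha-\frac{p_k}{q_k}\right| > \frac{1}{q_k+q_{k+1}}$, which gives $\frac1{\|q_k\alpha\|} < q_k+q_{k+1} \ll q_{k+1} = a_{k+1}q_k + q_{k-1} \ll a_{k+1}q_k$. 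Summed over the $\simeq M/q_k$ blocks, these exceptional terms contribute $\ll \frac{M}{q_k}\cdot a_{k+1}q_k = a_{k+1}M$, which is exactly the second term in the claimed bound.

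The main obstacle, and the step I expect to require the most care, is the bookkeeping at the two boundaries of the summation range: the final incomplete block (the range from the last full multiple of $q_k$ up to $M$) and a clean treatment of the residues $mp_k\equiv \pm 1$, where $\left\|\frac{mp_k}{q_k}\right\|=\frac1{q_k}$ and the lower bound $\left\|\frac{mp_k}{q_k}\right\|-\frac1{q_k}$ degenerates to $0$. These near-miss residues cannot be handled by the harmonic-sum argument either, so they must be absorbed into the exceptional-term estimate alongside the $mp_k\equiv 0$ case; fortunately there are only $\mathcal O(1)$ such residues per block, and for each the corresponding $m$ is close to a multiple of $q_k$, so the same argument via the left-hand side of \eqref{basic_eq} (applied to $\|m\alpha\|$ for $m$ near $q_k$) bounds $\frac1{\|m\alpha\|}$ by $\ll a_{k+1}q_k$, again contributing $\ll a_{k+1}M$ in total.

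Putting the pieces together, the ``generic'' terms in each block sum to $\ll q_k\log q_k$ and there are $\simeq M/q_k$ blocks, giving $\ll M\log q_k$, while the $\mathcal O(1)$ exceptional terms per block give $\ll a_{k+1}M$; the incomplete terminal block only adds a lower-order amount of the same two types. Adding these yields $S_M(\alpha) \ll M\log q_k + a_{k+1}M$, as desired. Note that for $\alpha$ of bounded type $a_{k+1}=\mathcal O(1)$ and $\log q_k \simeq \log M$, so the two terms combine into the expected $M\log M$; the extra term $a_{k+1}M$ is precisely what allows the bound to beat $M\log M$ when $\alpha$ has unusually large partial quotients, consistent with the failure of the uniform lower bound announced at the end of the previous section.
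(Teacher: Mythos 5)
Your block decomposition, the harmonic-sum treatment of the generic residues, and the final accounting all match the paper's proof, but your handling of the exceptional residues $mp_k\equiv\pm1\pmod{q_k}$ has a genuine gap. You assert that such $m$ are ``close to a multiple of $q_k$'' and that the left-hand side of \eqref{basic_eq} can be ``applied to $\|m\alpha\|$ for $m$ near $q_k$.'' Neither claim holds. Since $p_kq_{k-1}-p_{k-1}q_k=\pm1$, the congruence $mp_k\equiv\pm1\pmod{q_k}$ means $m\equiv\pm q_{k-1}\pmod{q_k}$, and $q_{k-1}$ need not be small relative to $q_k$; it is $mp_k$, not $m$, that lies near a multiple of $q_k$. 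Moreover, \eqref{basic_eq} is a statement about convergents only: it bounds $\|q_j\alpha\|$ and gives no information whatsoever about $\|m\alpha\|$ when $m$ is not a convergent denominator (it says nothing even about $\|(q_k+1)\alpha\|$). So your argument establishes $\frac1{\|m\alpha\|}\ll a_{k+1}q_k$ only for $m=q_k$ (and, after noting that $l\|q_k\alpha\|<\frac1{q_k}\le\frac12$, for $m=lq_k$), but not for the $\pm1$ residues, which is precisely where the sum can be large.

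What is missing is a lower bound on $\|m\alpha\|$ valid for \emph{every} $m<q_{k+1}$, and supplying it is the pivot of the paper's proof: there one shows $\|m\alpha\|\ge\frac12\|q_k\alpha\|$ for all $q_k\le m<q_{k+1}$, arguing by contradiction via Theorem 19 of \cite{Khinchin:CF} (Legendre's theorem: if $\|m\alpha\|<\frac1{2m}$, then $m$ is the denominator of a convergent). Combined with the left-hand side of \eqref{basic_eq} for the convergent $q_k$, namely $\|q_k\alpha\|>\frac1{q_k+q_{k+1}}$, this bounds every exceptional term by $\frac1{\|m\alpha\|}\ll q_{k+1}\ll a_{k+1}q_k$, after which your count of $\mathcal O(1)$ exceptional terms per block times $\simeq\frac M{q_k}$ blocks gives $\ll a_{k+1}M$ as desired. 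Alternatively you could invoke the law of best approximation (Theorem 17 of \cite{Khinchin:CF}), which gives $\|m\alpha\|\ge\|q_k\alpha\|$ outright for $m<q_{k+1}$, or carry out the sign analysis of $l(q_k\alpha-p_k)+(q_{k-1}\alpha-p_{k-1})$ by hand; but some ingredient of this kind must be stated and used, and your proposal as written does not contain it.
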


\begin{proof}Let's begin by applying the left inequality of \eqref{twosides_eq}. There are three cases when we don't gain any information from this inequality: when $[mp_k]=0,\pm 1$. We exclude these for now as they require a different treatment. Summing over the remaining $m$ from 1 to $q_k$ we get $$\sum_{\substack{1\le m\le q_k\\{}[mp_k]\ne0,\pm1}}\frac1{\|m\alpha\|}\le \sum_{\substack{1\le m\le q_k\\{}[mp_k]\ne0,\pm1}}\frac1{\|\frac{mp_k}{q_k}\|-\frac1{q_k}}\ll q_k\sum_{\substack{1\le m\le q_k\\{}[mp_k]\ne0,\pm1}}\frac1{[mp_k]-1}.$$

As in the proof of Theorem \ref{firstsumbelow_th}, we notice that the factor of $p_k$ is superfluous as $(p_k,q_k)=1$. Thus, the sum can rewritten as $$q_k\sum_{\substack{1\le n\le q_k\\{}[n]\ne0,\pm1}}\frac1{n-1}.$$ Clearly, this quantity is asymptotic to $q_k\log q_k$. There will be $\simeq\frac M{q_k}$ such terms, inasmuch as we get the first term in the bound.

Now we deal with terms $mp_k\equiv0,\pm1$. We shall estimate the error incurred by replacing $\|m\alpha\|$ by $\|q_k\alpha\|$. I claim that $\|m\alpha\|\hm\ge\frac12\|q_k\alpha\|$ for any $m$ such that $q_k\le m<q_{k+1}.$ This is certainly true if $m=q_k$, so suppose $m$ is \emph{not} a partial quotient of $\alpha.$ Proceed by contradiction. Assume that $\|m\alpha\|<\frac12\|q_k\alpha\|.$ By \eqref{basic_eq}, it follows that $\|q_k\alpha\|<\frac1{q_{k+1}}.$ Thus,
$$\|m\alpha\|<\frac12\|q_k\alpha\|<\frac1{2q_{k+1}}<\frac1{2m}.$$ By Theorem 19 in \cite{Khinchin:CF}, we get that $m$ is a partial quotient of $\alpha$, which contradicts our assumption.

Hence at the expense of a factor of two, we can replace $\|m\alpha\|$ by $\|q_{k+1}\alpha\|$. Thus, the bound for the sum \begin{equation}\label{eq:notcareful}\sum_{[mp_k]=0,\pm1}\frac1{\|m\alpha\|}\end{equation} is $\frac M{q_k}\frac1{\|q_k\alpha\|}\ll a_{k+1}M$. Indeed, from the left-hand side of \eqref{basic_eq} it follows that $\|q_k\alpha\|\gg\frac1{q_{k+1}}$, and desired result follows at once.

\end{proof}

One may wonder whether it is possible to improve on the second term or to get rid of it altogether. Indeed, we have nonchalantly replaced the sum by the product of the number of terms and the largest term. In general, the answer is no. If the sum consists of but one term, little can be done to improve the approximation $\|q_k\alpha\|\approx\frac1{q_{k+1}}$ as can be seen from \eqref{basic_eq}. On the other hand, if the sum consists of many terms (which is the same as saying that $M$ is close to the upper end of the interval), we can improve the bound to $M(1+\log a_{k+1})$. This improvement is based entirely on careful analysis of \eqref{eq:notcareful} and is left to the reader.

We can now discuss some of the consequences of this Theorem. First of all, we have established a result of \cite{HL} using a shorter and more elementary method. Clearly, for numbers of bounded type we have the upper bound $M\log M$, and the implied constant depends on $\alpha$ in both cases. It is curious that in this case the main contribution comes from the ``bulk'' terms (with $\frac1{\|m\alpha\|}$ small), while the ``special'' terms contribute less.

\mysection{Growth Criterion}

We have an upper bound and a lower bound for $S_M(\alpha)$. What is the function that captures the exact growth rate of the sum? The following Theorem lets us decide whether the sum grows faster or slower than any given function. 

\begin{theorem}\label{cases_th}Let $\phi(x)$ be a positive non-decreasing function. Then, for almost every $\alpha\in[0,1)$ we have $$\limsup_{M\to\infty}\left.\frac{S_M(\alpha)}{M\log M}\right/\phi(\log M)=\begin{cases} 0&\mbox{if }\displaystyle\sum_k\frac1{k\phi(k)}<\infty\\\\ \infty&\mbox{if }\displaystyle\sum_k\frac1{k\phi(k)}=\infty.\end{cases}$$
\end{theorem}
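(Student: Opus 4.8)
The plan is to reduce the entire statement to Khinchin's theorem (Theorem \ref{Hinchin_th}) applied to the size of the partial quotients $a_{k+1}$, using the upper estimate of Theorem \ref{firstsumabove_th} in the convergent case and a single-term lower estimate in the divergent case. Throughout I would work on the full-measure set on which the conclusion of Lemma \ref{exponential_th} holds, so that $k\log a\le\log q_k\le k\log A$ for all large $k$, with $1<a<A$ absolute. For each $M$ let $k=k(M)$ satisfy $q_k\le M<q_{k+1}$; then $k\to\infty$ as $M\to\infty$, and since $\phi$ is non-decreasing we have $\log M\ge\log q_k$ and $\phi(\log M)\ge\phi(\log q_k)$.

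For the convergent case I would begin from Theorem \ref{firstsumabove_th}, namely $S_M\ll M(\log q_k+a_{k+1})$, to obtain the uniform-over-the-interval bound
$$\frac{S_M}{M\log M\,\phi(\log M)}\ll\frac{\log q_k+a_{k+1}}{\log q_k\,\phi(\log q_k)}=\frac1{\phi(\log q_k)}+\frac{a_{k+1}}{\log q_k\,\phi(\log q_k)}.$$
The first term tends to $0$ because $\phi\to\infty$. For the second, using $\log q_k\ge k\log a$ I would bound it by $\tfrac{a_{k+1}}{k\log a\,\phi(k\log a)}$. The key observation is that for every fixed constant $B>0$ the series $\sum_k\frac1{k\,\phi(Bk)}$ has the same convergence behaviour as $\sum_k\frac1{k\phi(k)}$: by the integral test (the summand is eventually decreasing, since $x\phi(x)$ is increasing) it is comparable to $\int\frac{dx}{x\phi(Bx)}$, and the substitution $u=Bx$ turns this into $\int\frac{du}{u\phi(u)}\simeq\sum_k\frac1{k\phi(k)}$. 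Hence in the convergent case $\sum_k\frac1{k\,\phi(k\log a)}<\infty$, and applying the convergent half of Theorem \ref{Hinchin_th} with $\psi(k)=\eps\,k\log a\,\phi(k\log a)$, for each $\eps$ in a sequence tending to $0$, gives $a_{k+1}\le\eps\,k\log a\,\phi(k\log a)$ eventually on a full-measure set. Thus the $\limsup$ is at most $\eps$ for every $\eps$, i.e. it is $0$.

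For the divergent case a single term of the sum suffices. From the right-hand inequality of \eqref{basic_eq}, multiplied by $q_k$, one gets $\|q_k\alpha\|<\frac1{q_{k+1}}$, so taking $M=q_k$,
$$S_{q_k}(\alpha)\ge\frac1{\|q_k\alpha\|}>q_{k+1}\ge a_{k+1}q_k,$$
whence, using $\log q_k\le k\log A$,
$$\frac{S_{q_k}}{q_k\log q_k\,\phi(\log q_k)}>\frac{a_{k+1}}{\log q_k\,\phi(\log q_k)}\ge\frac{a_{k+1}}{k\log A\,\phi(k\log A)}.$$
By the scale-invariance above, divergence of $\sum_k\frac1{k\phi(k)}$ forces divergence of $\sum_k\frac1{k\phi(k\log A)}$, so the divergent half of Theorem \ref{Hinchin_th}, applied with $\psi(k)=c\,k\log A\,\phi(k\log A)$ for each $c$ in a sequence tending to $\infty$, yields $a_{k+1}>c\,k\log A\,\phi(k\log A)$ infinitely often on a full-measure set. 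Evaluating the ratio along the subsequence $M=q_k$ then makes the $\limsup$ exceed every $c$, i.e. it is $\infty$.

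The routine points are the bookkeeping of the finitely many exceptional small $m$ (already absorbed in the $\mathcal O$ of Theorem \ref{firstsumabove_th}) and the countable intersection of full-measure sets as $\eps\to0$ or $c\to\infty$. The step I expect to be the genuine crux is the scale-invariance $\sum_k\frac1{k\phi(Bk)}\simeq\sum_k\frac1{k\phi(k)}$: without it the random fluctuation of $\log q_k$ between $k\log a$ and $k\log A$ would leave a gap between the two halves of the argument, and it is exactly the logarithmic weight $\frac1k$ together with the substitution $u=Bx$ that closes this gap and matches the $\phi(\log M)$ of the statement with the $\phi(k)$ appearing in Khinchin's criterion.
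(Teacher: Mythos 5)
Your proposal is correct and takes essentially the same route as the paper: the convergent half via Theorem \ref{firstsumabove_th}, Lemma \ref{exponential_th}, and the convergent case of Khinchin's theorem applied to $a_{k+1}$ with threshold $\simeq \eps\, k\,\phi(k\log a)$, and the divergent half via the single term $m=q_k$ evaluated at $M=q_k$ with threshold $\simeq c\,k\,\phi(k\log A)$, followed by countable intersections over $\eps\to0$ and $c\to\infty$. The scale-invariance $\sum_k \frac{1}{k\phi(Bk)}\simeq\sum_k\frac{1}{k\phi(k)}$ that you single out as the crux is exactly the step the paper performs implicitly when it compares its sums to $\int^\infty\frac{dx}{x\phi(x)}$.
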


\begin{proof}
Let's verify the first line. By Theorem \ref{firstsumabove_th} we have $$\frac{S_M(\alpha)}{M\log M\phi(\log M)}\ll \frac1{\phi(\log M)}+\frac{a_{k+1}}{\log M\phi(\log M)}\ll \frac1{\phi(k\log a)}+\frac{a_{k+1}}{k\phi(k\log a)}$$ where $a$ comes from Theorem \ref{exponential_th}. Now by Khinchin's Theorem, $$l\{a_{k+1}>k\phi(k\log a)\big(\eps-\textstyle\frac1{\phi(k\log a)}\big)\mbox{ i.o.}\}=0$$ for any positive $\eps$ since $$\sum^\infty\frac1{k\phi(k\log a)(\eps-\frac1{\phi(k\log a)})}\simeq\sum^\infty\frac1{k\phi(k\log a)}\simeq\int^\infty\frac{dx}{x\phi(x)}<\infty.$$ We remark here that the sum extends over those $k$ for which $\eps-\frac1{\phi(k\log a)}>0$ and that $\log a>0$ by direct computation. Hence the quantity $$\left.\frac{S_M(\alpha)}{M\log M}\right/\phi(\log M)$$ exceeds the value $\eps$ finitely often with probability one. Taking $\eps=\frac1n$ for $n\in\mathbf N$ we get a countable union of measure zero sets, which itself has measure zero, inasmuch as the limit superior vanishes almost surely, as advertized.  

We go on to prove the second line. We shall concentrate on the term $m=q_k$ and exhibit a suitable sequence $M_k$ so that this term of the sum alone will diverge as $M\to\infty$. It is natural to take $M_k=q_k$. Then, we need to estimate $$\frac1{\|q_k\alpha\|q_k\log q_k\phi(\log q_k)}.$$ Using \eqref{basic_eq}, we get using Lemma \ref{exponential_th} that it is greater than $$\frac{q_{k+1}}{q_k\log q_k\phi(\log q_k)}\gg\frac{a_{k+1}}{\log q_k\phi(\log q_k)}\gg\frac{a_{k+1}}{k\phi(k\log A)}$$  for $k$ sufficiently large. Thus, by Khinchin's Theorem, we need to ensure that $\sum\frac1{k\phi(k\log A)}=\infty$. This is indeed the case since $\phi$ is non-decreasing and the sum can be compared to the integral. Hence, the quantity in question is almost surely unbounded.
\end{proof}

\mysection{Conclusion}

The above discussion summarizes completely the behavior of $S_M(\alpha)$ for all $M$. 
Theorem \ref{cases_th} gives the exact growth of the sum for almost all $\alpha$.  
 
We have obtained results for some measure zero sets, too. For numbers of bounded type, $$\frac{S_M(\alpha)}{M\log M}\simeq 1$$ and the absolute constants can be made explicit if necessary. Hardy and Littlewood claimed that $$\sum_{m=1}^M\frac1{|\sin \pi m\alpha|}=\mathcal O(M\log M)$$ and that this bound was best possible. 

It is worth pointing out that all the results above can be easily adapted to the sum $$\sum_{m=1}^M\frac1{\|m\alpha\|^\beta}$$ for $\beta>1$. Other expressions can be allowed in the sum, too. However, summation may become difficult as the simplification that we have used in Theorem \ref{firstsumbelow_th} might not work. For example, $$\sum_{m=1}^M\frac1{m\|m\alpha\|}$$ is harder to estimate, and more general summations like this one are a direction of further research.   

To better understand the growth of the original quantity $S_M(\alpha)$, it may be advantageous to look at averages of the sum: $$\frac1N\sum_{n=1}^N S_n(\alpha).$$ This is so because we have seen that for certain values of $M$ the sum is unusually large, while for others it is quite small. The general behavior could be elucidated through Ces\`aro means of this kind. This is possible direction for future investigation. 

\mysection{Acknowledgements}

I thank Prof. Sinai for providing me with the original problem and Prof. Gallagher for guiding me in solving that problem and for his support in generalizing it. I am also indebted to many of my friends for helping me with the research, improving the text, and typesetting it. Among them are 
Thomas Dumitrescu,
Dmytro Karabash, 
Alex Kontorovich,
Claire Lackner, 
Jonathan Luk, and 
Daniel Minsky.

\bibliographystyle{plain}
\bibliography{hardyarticle}

\begin{thebibliography}{1}

\bibitem{Chrystal}
G.~Chrystal.
\newblock {\em Algebra, an Elementary Textbook}.
\newblock Chelsea Publishing Corporation, New York, NY, 1952.

\bibitem{HL}
G.~H. Hardy and J.~E. Littlewood.
\newblock Some problems of diophantine approximation: a series of cosecants.
\newblock {\em Bulletin of the Calcutta Mathematical Society}, 20:251--266,
  1930.

\bibitem{Khinchin:art}
A.~Ya. Khinchin.
\newblock Zur metrischen kettenbruchtheorie.
\newblock {\em Compositio Mathematica}, 5(2):275--285, 1936.

\bibitem{Levy}
P.~L{\'e}vy.
\newblock {\em Th{\'e}orie de l'addition des variables al{\'e}atoires}.
\newblock Paris, 1937.

\bibitem{Walters:ET}
P.~Walters.
\newblock {\em Introduction to Ergodic Theory}.
\newblock Springer-Verlag, New York, NY, 1982.

\bibitem{Linnik}
\selectlanguage{russian}Ю.~В. Линник.
\newblock {\em Эргодические свойства
  алгебраических полей}.
\newblock изд. Ленинградского унивеситета, Л.,
  1967.

\bibitem{Sinai:IET}
Я.~Г. Синай.
\newblock {\em Введение в эргодическую теорию}.
\newblock Фазис, М., 1996.

\bibitem{Khinchin:CF}
\selectlanguage{russian}А.~Я. Хинчин.
\newblock {\em Цепные дроби}.
\newblock УРСС, М., 2004.

\end{thebibliography}

\end{document}